\documentclass{amsart}
\usepackage{amssymb}
\usepackage{graphicx} 
\usepackage[x11names]{xcolor}
\usepackage{pinlabel} 
\usepackage{colortbl} 
\usepackage{enumitem} 
\usepackage{comment} 
\usepackage{appendix}
\usepackage[normalem]{ulem} 

\definecolor{darkblue}{rgb}{0,0,0.4} 
\usepackage{xr-hyper}
\usepackage[colorlinks=true, citecolor=darkblue, filecolor=darkblue, linkcolor=darkblue,urlcolor=darkblue]{hyperref} 
\usepackage[all]{hypcap}
\usepackage{crossreftools}

\newtheorem{theorem}{Theorem}[section]
\newtheorem{lemma}[theorem]{Lemma}
\newtheorem{proposition}[theorem]{Proposition}

\theoremstyle{remark}

\theoremstyle{definition}

\numberwithin{equation}{section}

\DeclareMathOperator{\lk}{lk}
\DeclareMathOperator{\ks}{ks}
\DeclareMathOperator{\Arf}{\mathrm{Arf}}
\DeclareMathOperator{\Int}{\mathrm{Int}}
\newcommand{\del}{\partial}

\newcommand{\Z}{\mathbb{Z}}

\renewcommand{\a}{\alpha}
\renewcommand{\b}{\beta}

\newcommand{\CP}[1]{\mathbb{CP}^{#1}}
\newcommand{\bCP}[1]{\overline{\mathbb{CP}^{#1}}}

\title{A note on smoothly slice links in $S^2 \times S^2$}
\author{Marco Marengon and Clayton McDonald}
\date{}

\begin{document}

\begin{abstract}
We give an alternative proof of a result of Miyazaki and Yasuhara that there exists links that are not smoothly slice in $S^2 \times S^2$. 
We discuss potential applications to the detection of exotic $S^2 \times S^2$.
This is a follow-up note to a similar paper for the $\CP2 \# \bCP2$ case.
\end{abstract}

\maketitle

\tableofcontents

\section{Introduction}

In this note, which is meant as a companion to \cite{MM:CP2CP2bar}, we give a proof in the smooth category that there exists a non-slice 2-component link in $S^2 \times S^2$.

\begin{theorem}
    \label{thm:S2xS2}
    The 2-component link in Figure \ref{fig:S2xS2link} is not smoothly slice in $S^2 \times S^2$.
\end{theorem}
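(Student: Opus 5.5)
Since we do not see the figure, the plan assumes the link in Figure \ref{fig:S2xS2link} is a 2-component link $L = K_1 \cup K_2$ in $S^3$ whose components have prescribed linking number and Arf-type invariants, in analogy with the $\CP2 \# \bCP2$ example of \cite{MM:CP2CP2bar}. The argument is by contradiction. Suppose $L$ bounds disjoint smoothly embedded disks $D_1, D_2$ in $(S^2\times S^2)\setminus B^4$. Let $d_i = [D_i] \in H_2(S^2\times S^2;\Z) \cong \Z^2$, written $d_i = (a_i,b_i)$ in the basis given by the two sphere factors, so that $d_i\cdot d_j = a_ib_j + a_jb_i$. Disjointness forces $\lk(K_1,K_2) = d_1 \cdot d_2$. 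Each $d_i^2 = 2a_ib_i$ is even, and $d_i$ is characteristic exactly when $a_i, b_i$ are both even. These constraints should already restrict the possible classes, using the linking number read off the figure.

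The main tool is the obstruction from the $\CP2\#\bCP2$ paper, adapted to the even form. For a disk $D$ whose class $d$ is characteristic, the generalized Rokhlin (Kervaire--Milnor / Freedman--Kirby) theorem gives
\[
\Arf(K) \equiv \tfrac{1}{8}\bigl(d\cdot d - \sigma(S^2\times S^2)\bigr) = \tfrac{d\cdot d}{8} \pmod 2 .
\]
For non-characteristic classes, I would instead use the standard trick: tube or band $D_1$ and $D_2$ together, possibly with parallel copies, to produce a single disk or surface bounded by a knot or sublink in a characteristic class. Then I would apply Robertello's $\Arf$ invariant of the resulting proper link, expressed through $\Arf(K_1)$, $\Arf(K_2)$ and the linking number. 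The small-genus case will be handled with the Rokhlin-type formula for surfaces with even-class boundary, in the manner of \cite{MM:CP2CP2bar}.

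The key steps, in order, are as follows.
\begin{enumerate}
\item Use the linking number to restrict $(d_1,d_2)$ to a short list up to the symmetries of $S^2\times S^2$: swapping the factors and orientation reversals of each factor.
\item In each case with some $d_i$ characteristic, apply the Arf congruence to $K_i$. The figure presumably chooses $\Arf(K_i)$ to contradict it.
\item In the remaining cases, where both $d_i$ are primitive with odd coordinates, pass to the class $d_1 + d_2$ or $d_1 - d_2$ (the $\pm$ band sum). This is characteristic when the parities align, and we apply the congruence to the band-sum knot, whose Arf invariant equals $\Arf(K_1)+\Arf(K_2)+$ a term determined by $\lk$.
\end{enumerate}

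The main obstacle I expect is the mixed-parity case, for instance $d_1=(1,0)$ and $d_2=(0,1)$, where no combination is characteristic. Rokhlin-type invariants are blind there, because such classes are realized by genuine spheres. For this case I would try a smooth-category input such as the $10/8$-type or Heegaard Floer $d$-invariant bound: blow up or take a branched double cover along a characteristic combination such as $d_1 + d_2 + $ (twice something), and compare against the $d$-invariant of the surgered boundary. Alternatively, I would surger $L$ into a knot in $S^3_{\pm}$ and use the genus bound from \cite{MM:CP2CP2bar}. Making this step work is where the specific choice of the link in the figure must be exploited.
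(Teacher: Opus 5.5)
Your proposal has a genuine gap. The only obstruction it actually carries out is the Arf/Rokhlin congruence. For this link, that congruence neither reduces the problem to finitely many cases nor rules out the cases that remain.

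\textbf{Step 1 fails.} The constraint $\alpha\cdot\beta=4$ (in the paper's conventions $\alpha\cdot\beta=-\lk(A,B)$, and $\lk(A,B)=-4$) has infinitely many solutions. The symmetry group of the hyperbolic form is finite, so the linking number alone never gives a short list. The missing smooth input is the one the paper is built on. Both components are $m(7_2)$, which has $g_4=1$. Capping a slice disc with a genus-one surface in $B^4$ gives a closed genus-one surface in the same class. Ruberman's smooth minimal genus function $G(a_1,a_2)=(|a_1|-1)(|a_2|-1)$ then forces $\min(|a_1|,|a_2|)\le 1$ or $|a_1|=|a_2|=2$. Even after this, up to symmetry, the following survive: two infinite families $((1,x),(1,4-x))$ and $((1,x),(-1,4+x))$, and four sporadic pairs with $\alpha=(2,\pm2)$.

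\textbf{Steps 2--3 give no contradiction on these survivors.}
In the family $((1,x),(-1,4+x))$, the classes $\alpha+\beta=(0,4+2x)$ and $\alpha-\beta=(2,-4)$ are characteristic, of square $0$ and $-16$. The knots realizing them are $A\#B$ and $A\#B^r\#T_{2,2n\pm1}$ with $n=4$. Their Arf invariants are $1+1\equiv 0$ and $1+1+0\equiv 0$, exactly what the congruence requires. The same happens in the other family, where $\alpha\pm\beta$ are $(2,4)$ and $(0,2x-4)$. For $\alpha=(2,\pm2)$ the congruence predicts $\Arf(A)=1$, which is true. Arf only eliminates characteristic classes of square $0$, such as $(0,2)$ or $(0,4)$.

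More conceptually, Arf and signature obstructions hold for locally flat discs, and this link is not known to be topologically non-slice. A genuinely smooth ingredient is therefore unavoidable. Your suggested $10/8$ or $d$-invariant step is left unspecified. It is also aimed at a case, $d_1=(1,0)$, $d_2=(0,1)$, that cannot occur, since $\alpha\cdot\beta$ must equal $4$.

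\textbf{How the paper finishes.}
\begin{itemize}
\item Ruberman's theorem is applied again to $A\#B$, which has $4$-genus at most $2$ and bounds a disc in class $\alpha+\beta$. This excludes $\alpha+\beta=(2,4)$ and $(3,3)$, which need genus $3$ and $4$ respectively.
\item The Viro--Gilmer bound $|\sigma_K(-1)-[D]^2/2|\le 2$ for $2$-divisible classes is used twice. It excludes $A\#B$ in class $(0,4+2x)$, since the signature is $4$. It excludes $A$ in class $(2,-2)$, since $|2+4|>2$.
\item The last case $((2,2),(-1,3))$ is excluded by the cable trick. Here $A\#B_{(2,3)}$ would bound a disc in the $8$-divisible class $\alpha+2\beta=(0,8)$. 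By Litherland's satellite formula its $\zeta_8$-signature is $2+2+0=4>2$, violating the bound.
\end{itemize}
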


\begin{figure}
    \centering    
    \includegraphics{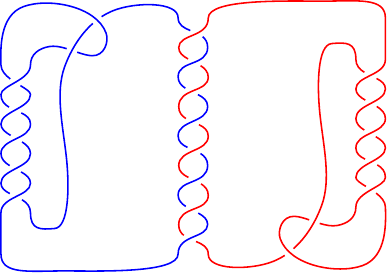}
    \caption{A 2-component link which is not smoothly slice in $S^2 \times S^2$.}
    \label{fig:S2xS2link}
\end{figure}

Recall that a link $L \subset S^3$ is (smoothly or topologically) \emph{slice in $X$} if it bounds (smoothly or locally flatly) embedded disjoint discs in $X \setminus B^4$.
A classical result \cite{N:slice, S:slice} shows that every knot is smoothly slice in both $S^2 \times S^2$ and $\CP2 \# \bCP2$.
In \cite{MM:CP2CP2bar} we produced a 2-component link that is not smoothly slice in $\CP2\#\bCP2$, and show that it can potentially lead to detecting an exotic $\CP2\#\bCP2$ (see \cite[Proposition 6.1]{MM:CP2CP2bar} for a precise statement).

Our method from \cite{MM:CP2CP2bar} can be adapted to produce links that are not slice in $S^2 \times S^2$, which in fact proves a result more general than our Theorem \ref{thm:S2xS2} (see Theorem \ref{thm:S2xS2assumptions}). We note that Miyazaki and Yasuhara have previously proved that there is a non-slice 2-component link in a punctured $S^2 \times S^2$ \emph{in the topological category} \cite{MY:generalized}.
While their result is technically stronger, it cannot be used to detect non-sliceness (in $S^2 \times S^2$) of our link in Figure \ref{fig:S2xS2link}, because their argument requires the signatures of the components to be $0 \pmod 8$.
More importantly, since their argument is topological, it cannot be used to detect exotic 4-manifolds.
By contrast, our proof uses the smooth genus function, so our link is not known to be non-slice in the topological category. This means we can potentially use it to construct exotic 4-manifolds, see Proposition \ref{prop:exotic}.

\subsection{Acknowledgements}
We thank Akira Yasuhara for pointing us towards earlier work on this topic.
We are very grateful to Andr\'as Stipsicz and Marco Golla for their support and helpful discussion.
We also thank the anonymous referees for their careful read and their suggestions, and M\'arton Beke for his comments.
The authors were partially supported by NKFIH grants K-146401 and Excellence-151337, and ERC Advanced Grant KnotSurf4d.

\section{Review of some obstructive methods}

As in \cite{MM:CP2CP2bar}, we review the statements of some obstructive methods that will be employed in our argument. We refer to \cite{MMP, MM:CP2CP2bar} for more details.

\subsection{Levine-Tristram signatures}

Here and below, we denote the signature function of a knot $K \subset S^3$ by $\sigma_K \colon S^1 \to \Z$.

\begin{theorem}[{\cite{V:G-signature, G:G-signature}, see \cite[Theorem 3.6]{MMP} for this statement}]
\label{thm:signature}
    Let $X$ be a topological closed oriented 4-manifold with $H_1(X;\Z)=0$.
    Let $\Sigma \subset X^\circ$ be a locally flat, properly embedded surface of genus $g$, with boundary a knot $K \subset S^3$.
    If the homology class $[\Sigma] \in H_2(X^\circ, \partial X^\circ;\Z) \cong H_2(X;\Z)$ is divisible by a prime power $m=p^k$, then
    \[
    \left| 
    \sigma_K(e^{2\pi r i/m}) + \sigma(X) - \frac{2r(m-r) \cdot [\Sigma]^2}{m^2}
    \right|
    \leq
    b_2(X) + 2g,
    \]
    for every $r=1, \ldots, m-1$.
\end{theorem}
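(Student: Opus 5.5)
The plan is the classical branched-cover (G-signature) argument of Viro and Gilmer. Write $m=p^k$ and $[\Sigma]=m\,\beta$ with $\beta\in H_2(X;\Z)$.

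\emph{Step 1: close up the surface.} Work in $X^\circ = X\setminus \Int B^4$, with $\partial X^\circ = S^3$. Glue to $X^\circ$ a copy of $B^4$ with the orientation that restores $X$, and push a Seifert surface $F$ of $K$ (genus $h$) into this ball. Then $\widehat\Sigma=\Sigma\cup F$ is a closed locally flat surface in $X$ of genus $g+h$, still in the class $m\beta$.

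\emph{Step 2: build the branched cover.} Since $H_1(X;\Z)=0$ and the class is divisible by $m$, the complement $X\setminus\widehat\Sigma$ has $H_1\cong\Z/m$, generated by a meridian. This gives an $m$-fold cyclic branched cover $\widetilde X\to X$ along $\widehat\Sigma$, where $\widetilde X$ is a closed topological $4$-manifold with a $\Z/m$-action. For locally flat surfaces one uses topological normal bundles, which exist in dimension four.

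\emph{Step 3: G-signature computation.} Let $\sigma_r$ be the signature of the $e^{2\pi ri/m}$-eigenspace of the intersection form on $H_2(\widetilde X;\mathbb{C})$.
\begin{itemize}
\item The $G$-signature theorem, via the Rokhlin--Gordon formula for branched covers, gives $\sigma_r=\sigma(X)-\frac{2r(m-r)}{m^2}[\widehat\Sigma]^2$ for $r\neq 0$.
\item Additivity of equivariant signatures splits this into a contribution from the cover of $X^\circ$ branched along $\Sigma$, plus a contribution from the cover of $B^4$ branched along the pushed-in $F$.
\item The latter is the Levine--Tristram signature $\sigma_K(e^{2\pi ri/m})$, computed from the Seifert form $(1-\omega)V+(1-\bar\omega)V^T$, up to the sign convention for orientation. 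Signs must be tracked so that the formula reads $\sigma_K+\sigma(X)-\frac{2r(m-r)}{m^2}[\Sigma]^2$.
\end{itemize}

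\emph{Step 4: bound the eigenspace dimension.} The $\omega$-eigenspace of $H_2$ of the cover of $X^\circ$ branched along $\Sigma$ has dimension at most $b_2(X)+2g$. To see this, compare Euler characteristics: $\chi(\widetilde{X^\circ})=m\chi(X^\circ)-(m-1)\chi(\Sigma)$. Then show that $H_1$ and $H_3$ of the cover vanish in the $\omega$-eigenspaces. This uses the fact that $m$ is a prime power, via the standard Smith-theory/transfer argument: $H_*(\widetilde{\cdot};\Z/p)$ is controlled by the base, so $H_1(\widetilde{X^\circ};\mathbb{Q})$ has no nontrivial-eigenvalue part. The dimension count is then $\chi$-theoretic: each nontrivial eigenspace has dimension $\chi(X^\circ)-\chi(\Sigma)-1=b_2(X)+2g$. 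The absolute value of a signature is at most the dimension, which yields the inequality.

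\emph{Main obstacle.} I expect Step 4 to be the main obstacle, namely the vanishing of odd-degree nontrivial eigenspaces, which is where the prime-power hypothesis enters. The locally flat technicalities in Step 2 are a secondary difficulty. For Step 4 I would first try the argument that a $\Z/p^k$-cover of a $\Z/p$-homology circle-like complement is a $\Z/p$-homology equivalence in the relevant degrees.
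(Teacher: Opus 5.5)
Your outline is correct and is the classical Viro--Gilmer branched-cover argument. The paper gives no proof of its own here, only the citations, and your steps match that argument: close up $\Sigma$, use the $G$-signature formula for cyclic branched covers together with Novikov additivity, identify the ball term with $\sigma_K(e^{2\pi ri/m})$ via Viro, and bound $|\sigma_\omega(\widetilde{X^\circ})|$ by $\dim H_2(\widetilde{X^\circ};\mathbb{C})_\omega=b_2(X)+2g$. The prime-power hypothesis enters through $H_1(\widetilde{X^\circ};\Z/p)=0$, which holds for $p^k$-fold branched covers because $H_1(X^\circ;\Z/p)=0$, and this also kills $H_3$ by duality since $\partial\widetilde{X^\circ}$ is connected.

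There are only cosmetic slips. $H_1(X\setminus\widehat\Sigma)\cong\Z/d$ with $d$ the divisibility of $[\Sigma]$; it merely surjects onto $\Z/m$, which is all you need. The count should read $\chi(X^\circ)-\chi(\Sigma)=b_2(X)+2g$, without the extra $-1$. The equal splitting of the Euler characteristic among the nontrivial eigenspaces comes from $C_*(\widetilde{X^\circ},\widetilde\Sigma;\mathbb{C})$ being a free $\mathbb{C}[\Z/m]$-complex.
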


We recall that the signatures of a satellite of a knot can be computed by the following formula.

\begin{theorem}[{\cite[Theorem 2]{L:signatures}}]
\label{thm:satellite}
    Let $C$ be a knot and $P$ be a pattern with winding number $w$. Then for every root of unity $\zeta$
    \[
    \sigma_{P(C)}(\zeta) = \sigma_{C}(\zeta^w) + \sigma_{P(U)}(\zeta),
    \]
    where $P(C)$ denotes the satellite of $C$ with pattern $P$, and $U$ denotes the unknot.
\end{theorem}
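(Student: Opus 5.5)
The plan is to build an explicit Seifert surface for $P(C)$ out of a Seifert surface for $P(U)$ and $w$ parallel copies of a Seifert surface for $C$. I would then read off the Levine--Tristram signature from the resulting Seifert matrix. Recall that $\sigma_K(\zeta)$ is the signature of the Hermitian matrix $(1-\zeta)A+(1-\bar\zeta)A^T$, where $A$ is any Seifert matrix of $K$.

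\emph{Step 1 (surgery on a Seifert surface).} View $P$ inside the standard unknotted solid torus $V\subset S^3$, whose complement $V'=S^3\setminus \Int V$ is also a solid torus. Choose a Seifert surface $S$ for $P(U)$ meeting $V'$ in exactly $|w|$ parallel meridian discs of $V'$, all oriented the same way. This is possible after tubing away oppositely oriented pairs, since the algebraic intersection of $P$ with a meridian disc of $V$ is $w$. Let $G$ be a Seifert surface for $C$. The satellite is formed by a diffeomorphism from $V$ onto a neighbourhood of $C$ sending the longitude of $V$ to the $0$-framed longitude of $C$. Under it, each meridian disc of $V'$ is replaced by a parallel copy $G_1,\dots,G_{|w|}$ of $G$, taken with respect to the $0$-framing. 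This gives a Seifert surface $F$ for $P(C)$ with
\[
H_1(F)\cong H_1(S)\oplus H_1(G)^{\oplus |w|}.
\]

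\emph{Step 2 (the Seifert form splits).} I would show that the Seifert matrix of $F$ is block diagonal, $A_S\oplus B$. Here $A_S$ is a Seifert matrix of $P(U)$, and $B$ is the $|w|\times|w|$ block matrix whose $(i,j)$ block is $A_C$ for $i\le j$ and $A_C^T$ for $i>j$ (up to convention). The cross terms vanish because a curve on $G_i$, and its push-off, lies in a neighbourhood of $G$ inside the exterior of $C$. Since $G$ is $0$-framed, such a curve is null-homologous in the complement of the image of $V$, so it has zero linking number with anything lying in that image. Using $S$ instead of $F$ to compute the form on $H_1(S)$ is legitimate for the same reason: only the longitude of $V$ matters, and it maps to the $0$-framed longitude. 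Hence
\[
\sigma_{P(C)}(\zeta)=\sigma_{P(U)}(\zeta)+\operatorname{sign}\big((1-\zeta)B+(1-\bar\zeta)B^T\big).
\]
For $w=0$ the block $B$ is empty, which matches $\sigma_C(1)=0$.

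\emph{Step 3 (the main obstacle).} It remains to show that
\[
\operatorname{sign}\big((1-\zeta)B+(1-\bar\zeta)B^T\big)=\sigma_C(\zeta^w).
\]
I expect this linear-algebra computation to be the hardest step. Write $B=A_C\otimes E+A_C^T\otimes E'$, where $E$ is the upper triangular all-ones matrix (diagonal included) and $E'=J-E$ is the strictly lower part. The plan is to perform block row and column operations: subtract consecutive copies, replacing the basis $x_i$ by $x_i-x_{i+1}$. This should turn the form into an explicit block-tridiagonal Hermitian matrix. Eliminating it successively should then show it is congruent to $(1-\zeta^w)A_C+(1-\bar\zeta^{w})A_C^T$ plus a sum of blocks of zero signature. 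The key identity behind this is
\[
\textstyle\sum_{k=0}^{w-1}\zeta^k(1-\zeta)=1-\zeta^w.
\]
If a direct congruence is messy, I would fall back on a cyclic-cover argument. In that argument, $\zeta$-signatures are twisted signatures of the $w$-fold branched cover restricted to the copies of $G$, whose intersection form restricted to that part is a pullback of the form of $C$ at $\zeta^w$.
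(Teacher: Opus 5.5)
The paper gives no proof of this statement; it is quoted from Litherland. The relevant comparison is therefore with the standard Seifert-surface argument, and your Steps 1--2 reproduce it correctly. The cross terms vanish because a curve on $G_i$ has linking number zero with $C$, so it bounds in $S^3\setminus\Int N(C)$. The $0$-framing is exactly what makes the re-embedding of $V$ preserve linking numbers of curves in $V$.

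The gap is Step 3. It carries all of the content of the formula, and you only describe what the elimination ``should'' give; the cyclic-cover fallback is too vague to count. Moreover, successive elimination does not work uniformly. It hits a zero pivot whenever $\zeta\neq 1$ and $\zeta^j=1$ for some $2\le j\le |w|$, which includes every $\zeta\neq 1$ with $\zeta^w=1$. So even if completed along the lines you indicate, your sketch would not cover ``every root of unity''. (The paper's own use, $w=2$ and $\zeta=\zeta_8$, happens to be generic.)

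Here is what has to be supplied. Take $w>0$; for $w<0$, replace $A_C$ by $A_C^T$ and use $\sigma_C(\bar\eta)=\sigma_C(\eta)$. Let $\Omega=A_C-A_C^T$, which is unimodular. In the basis $x_i-x_{i+1}$ ($i<w$), $x_w$, the form $(1-\zeta)B+(1-\bar\zeta)B^T$ becomes block tridiagonal, with:
\begin{itemize}
\item diagonal blocks $(\bar\zeta-\zeta)\Omega$ in positions $1,\dots,w-1$;
\item last diagonal block $(1-\zeta)A_C+(1-\bar\zeta)A_C^T$;
\item superdiagonal blocks $(1-\bar\zeta)\Omega$ and subdiagonal blocks $-(1-\zeta)\Omega$.
\end{itemize}
Block Gaussian elimination then produces pivots $\pi_k\Omega$ for $k<w$ and a final pivot $\Lambda$, where
\[
\pi_k=(1-\zeta)\,\frac{1-\bar\zeta^{\,k+1}}{1-\bar\zeta^{\,k}}\in i\mathbb{R},
\qquad
\Lambda=\frac{|1-\zeta|^2}{|1-\zeta^w|^2}\Big((1-\zeta^w)A_C+(1-\bar\zeta^{\,w})A_C^T\Big).
\]
Each $\pi_k\Omega$ has signature $0$ because $i\Omega$ does, and $\Lambda$ has signature $\sigma_C(\zeta^w)$. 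This is your claim in the generic case.

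Now suppose $\zeta$ has order $j\in\{2,\dots,w\}$. Then $\pi_{j-1}=0$; for $\zeta=-1$, $w=2$ this is already the first pivot $\bar\zeta-\zeta$. Blocks $j-1,j$ then span a nondegenerate metabolic summand: block $j-1$ is isotropic and the coupling $(1-\bar\zeta)\Omega$ is invertible. This summand has signature $0$. The inverse of $\begin{pmatrix}0&U\\U^*&Y\end{pmatrix}$ with $U$ invertible has vanishing lower-right block. Hence splitting this summand off leaves blocks $j+1,\dots,w$ in exactly the same tridiagonal shape, of length $w-j$. Induction on $w$ together with $\zeta^j=1$ gives $\sigma_C(\zeta^{w-j})=\sigma_C(\zeta^w)$. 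With this inserted, your outline becomes a complete proof.
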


\subsection{Arf invariant}

In Theorem \ref{thm:Arf} below, $\ks(X)$ denotes the Kirby-Siebenmann invariant of a topological, closed 4-manifolds, and $\Arf(X, \Sigma)$ denotes the Arf invariant of $\Sigma$ in $X$ (see \cite{FK:Rochlin} for details). Recall that $\ks(X) \equiv 0$ if $X$ admits a smooth structure, and $\Arf(X, \Sigma) \equiv 0$ if $\Sigma$ is a disc. Also recall that a surface $\Sigma$ in a 4-manifold $X$ is characteristic if its homology class $[\Sigma] \in H_2(X; \Z)$ is as well, i.e., if for every $x \in H_2(X; \Z)$, $Q_X([\Sigma], x) \equiv Q_X(x,x) \pmod 2$, where $Q_X$ is the intersection form.

\begin{theorem}[{\cite{K:topology, Y:CL}, see \cite[Theorem 3.1]{MMP} for this statement}]
\label{thm:Arf}
Let $X$ be a topological, closed, connected, oriented 4-manifold.
If $\Sigma \subset X^\circ$ is a properly embedded, locally flat characteristic surface with boundary a knot $K$, then
\[
\frac{\sigma(X) - [\Sigma]^2}8 \equiv \Arf(K) + \Arf(X, \Sigma) + \ks(X) \pmod2
\]
\end{theorem}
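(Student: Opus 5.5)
The plan is to reduce the statement to the closed case, the generalized Rokhlin theorem of Freedman and Kirby, extended to the topological setting by Kirby and Matsumoto. That theorem says that for a closed, locally flat, characteristic surface $S$ in a closed oriented topological 4-manifold $X$,
\[
\frac{\sigma(X) - [S]^2}{8} \equiv \Arf(X,S) + \ks(X) \pmod 2 .
\]
I will assume this closed version as known, as in \cite{FK:Rochlin, K:topology}. The task is then to close up $\Sigma$ and control each term.

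\emph{Step 1: closing the surface.} Let $F \subset S^3$ be a Seifert surface for $K$, and push its interior slightly into the removed ball $B^4$, so that $X = X^\circ \cup B^4$. Set $S = \Sigma \cup_K F$. This is a closed, locally flat surface in $X$.
\begin{itemize}
\item Since $H_2(B^4, S^3)=0$, the class of $S$ in $H_2(X;\Z)$ corresponds to $[\Sigma]$ under $H_2(X^\circ,\partial X^\circ;\Z)\cong H_2(X;\Z)$. Hence $S$ is characteristic exactly when $\Sigma$ is.
\item The self-intersections agree, $[S]^2=[\Sigma]^2$. Here $[\Sigma]^2$ is computed using the zero framing from $S^3$, which is the framing induced by $F$.
\end{itemize}
Applying the closed theorem to $S$, it remains to show
\[
\Arf(X,S) = \Arf(X,\Sigma) + \Arf(K).
\]

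\emph{Step 2: additivity of the Arf invariant.} Recall the construction of the Rokhlin quadratic form on $H_1(S;\Z/2)$ (or on the part of it where the form is defined). For a curve $x$ on $S$, choose a membrane $M$ bounded by $x$ that meets $S$ transversally in its interior. Then
\[
q(x) = \#(\text{twisting of the normal framing of } M \text{ relative to } S) + \#(\mathrm{int}\, M \cap S) \pmod 2 .
\]
Since $K$ is connected, $H_1(S;\Z/2)$ splits as $H_1(\Sigma;\Z/2)\oplus H_1(F;\Z/2)$, and the two summands are orthogonal for the intersection pairing.
\begin{itemize}
\item For curves on $\Sigma$, membranes may be chosen inside $X^\circ$ after an isotopy away from the small ball. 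This makes $q$ agree with the form defining $\Arf(X,\Sigma)$.
\item For curves on $F$, take membranes in the collar $S^3\times I$. A curve $x\subset F$ bounds a surface in $S^3$ pushed slightly into $B^4$. A standard computation identifies $q(x)$ with $\lk(x,x^+) \bmod 2$, the Seifert form mod 2.
\end{itemize}
Since $\Arf$ is additive under orthogonal sums, this gives $\Arf(X,S)=\Arf(X,\Sigma)+\Arf(K)$, which completes the argument.

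I expect the main obstacle to be that last identification: matching the membrane-twisting definition with the Seifert linking number mod 2. There are also minor bookkeeping issues, namely making the membranes for $\Sigma$-curves and $F$-curves avoid the other piece, and checking that $q$ is well defined on $H_1(S;\Z/2)$ when $S$ is characteristic. To handle the identification, I would first take a Seifert membrane in $S^3$ bounded by $x$. I would then compare the framing it induces on $x$ with the framing from $F$: the difference is exactly $\lk(x,x^+)$. Finally I would note that pushing into $B^4$ creates no new intersections with $S$.
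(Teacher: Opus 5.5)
The paper gives no proof of its own here, only the citation to \cite{K:topology, Y:CL} and \cite[Theorem 3.1]{MMP}. Your argument is correct and is the standard proof behind that citation: cap off with a pushed-in Seifert surface, apply the closed generalized Rokhlin theorem, split the Rokhlin form orthogonally as $H_1(\Sigma;\Z/2)\oplus H_1(F;\Z/2)$, and identify the form on the pushed-in Seifert surface with the mod~2 Seifert form. The one point to state precisely is the membrane for $x\subset F$. Take the vertical annulus $x\times[t_0,t_1]$ and cap it with a Seifert surface for $x$ placed at a level $t_1$ deeper than all of the pushed-in $F$; in $S^3$ itself a Seifert surface for $x$ meets $F$ along arcs and circles, not transversally in points. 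This membrane meets $S$ only along $x$, and its twisting, measured against the Seifert framing of $x$, is $\lk(x,x^+)$.
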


\section{A 2-component link not smoothly slice in \texorpdfstring{$S^2 \times S^2$}{S2 x S2}}
\label{sec:S2xS2}

The proof of Theorem \ref{thm:S2xS2} is a rather long case analysis and will take the rest of the section. The structure follows closely \cite[Section 5]{MM:CP2CP2bar}.

For the rest of this section, let $X:=S^2 \times S^2$ and $X^\circ := X \setminus \Int(B^4)$.
Let $A$ and $B$ be the two link components, and suppose that $L$ bounds two disjoint smooth discs $D_A, D_B \subset X^\circ$, so that $\del D_A = A$ and $\del D_B = B$, and let $\a := [D_A]$ and $\b := [D_B]$ denote the homology classes of such discs in $H_2(X^\circ, S^3) \cong H_2(X)$.
The idea of the proof is to combine various obstructive methods to rule out all the possible pairs $(\a,\b)$, hence showing that the link cannot be slice.
To do so, we will progressively add assumptions on $L$ until we eventually eliminate all pairs $(\a,\b)$. All the assumptions are collected together in Appendix \ref{appendix:S2xS2}.

\begin{figure}
\begingroup%
  \makeatletter%
  \providecommand\color[2][]{%
    \errmessage{(Inkscape) Color is used for the text in Inkscape, but the package 'color.sty' is not loaded}%
    \renewcommand\color[2][]{}%
  }%
  \providecommand\transparent[1]{%
    \errmessage{(Inkscape) Transparency is used (non-zero) for the text in Inkscape, but the package 'transparent.sty' is not loaded}%
    \renewcommand\transparent[1]{}%
  }%
  \providecommand\rotatebox[2]{#2}%
  \newcommand*\fsize{\dimexpr\f@size pt\relax}%
  \newcommand*\lineheight[1]{\fontsize{\fsize}{#1\fsize}\selectfont}%
  \ifx\svgwidth\undefined%
    \setlength{\unitlength}{175.70183275bp}%
    \ifx\svgscale\undefined%
      \relax%
    \else%
      \setlength{\unitlength}{\unitlength * \real{\svgscale}}%
    \fi%
  \else%
    \setlength{\unitlength}{\svgwidth}%
  \fi%
  \global\let\svgwidth\undefined%
  \global\let\svgscale\undefined%
  \makeatother%
  \begin{picture}(1,0.54625635)%
    \lineheight{1}%
    \setlength\tabcolsep{0pt}%
    \put(0,0){\includegraphics[width=\unitlength,page=1]{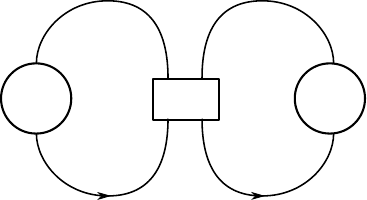}}%
    \put(0.07058651,0.26130829){\color[rgb]{0,0,0}\makebox(0,0)[lt]{\lineheight{1.25}\smash{\begin{tabular}[t]{l}\small $T_A$\end{tabular}}}}%
    \put(0.88102587,0.26130834){\color[rgb]{0,0,0}\makebox(0,0)[lt]{\lineheight{1.25}\smash{\begin{tabular}[t]{l}\small $T_B$\end{tabular}}}}%
    \put(0.49196079,0.26130829){\color[rgb]{0,0,0}\makebox(0,0)[lt]{\lineheight{1.25}\smash{\begin{tabular}[t]{l}\small $n$\end{tabular}}}}%
  \end{picture}%
\endgroup%

    \caption{The structure of the link $L$.}
    \label{fig:S2xS2structure}
\end{figure}

\subsection{The structure of the link \texorpdfstring{$L$}{L}}
We make some assumptions on the structure of $L$ to simplify our case analysis and our computations in the later subsections. Specifically, we assume that:
\begin{enumerate}[font=\textbf]
    \item[({\crtcrossreflabel{A1}[it:A1]})]
    $L$ has a diagram as in Figure \ref{fig:S2xS2structure}, where $T_A$ (resp.\ $T_B$) is a $(1,1)$-tangle whose closure is $A$ (resp.\ $B$), and $n \in \Z$ is the number of right-handed full twists added in the region.
\end{enumerate}
If $\Sigma_A, \Sigma_B \subset X^\circ$ are properly embedded surfaces in homology classes $\a$ and $\b$, and with boundary $A$ and $B$ respectively, then the following relation holds
\begin{equation}
\label{eq:lkformula}
    \#(\Sigma_A \pitchfork \Sigma_B) + \lk(m(L)) = \a \cdot \b,
\end{equation}
where $m(L)$ denotes the mirror of $L$. In particular, if $\Sigma_A$ and $\Sigma_B$ are disjoint, then \eqref{it:A1} implies that $n = - \lk(L) = \a \cdot \b$.

We recall the following lemma. In its statement, $K^r$ denotes the reverse of the knot $K$ (i.e.\ $K$ with reversed orientation), not to be confused with the mirror $m(K)$ of $K$, and the knot $K_{(p,q)}$ denotes the $(p,q)$-cable of $K$.

\begin{lemma}[{\cite[Lemma 5.1]{MM:CP2CP2bar}}]
\label{lem:sumsandcablings}
Suppose that $X$ is a smooth, connected 4-manifold, $L = A \cup B$ is a link satisfying \eqref{it:A1}, and that there are two disjoint smooth discs $D_A, D_B \subset X^\circ$, with $\del D_A = A$ and $\del D_B = B$. If $\a := [D_A]$ and $\b := [D_B]$, then:
\begin{itemize}
    \item the knot $A \# B$ bounds a smooth disc in $X^\circ$ in homology class $\a + \b$;
    \item the knot $A \# B^r \# T_{2, 2n\pm1}$ bounds a smooth disc in $X^\circ$ in homology class $\a - \b$;
    \item the knot $A \# (B_{(2,-2\beta^2-2n\pm1)})$ bounds a smooth disc in $X^\circ$ in homology class $\a + 2\b$.
\end{itemize}
Note that $n=-\lk(A,B)$.
\end{lemma}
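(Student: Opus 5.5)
The three constructions all come from band moves and tubing between the disjoint discs $D_A$ and $D_B$, guided by the diagram in \eqref{it:A1}. A boundary band sum of two disjoint discs is again an embedded disc. So each claim reduces to two things: exhibiting a suitable band (or a suitable pushed-off copy of $D_B$), and identifying the resulting boundary knot in $S^3$ from Figure \ref{fig:S2xS2structure}.

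\emph{First item.} I would choose a band in $S^3 = \del X^\circ$ joining $A$ and $B$ that runs straight through the twist region and is compatible with the orientations. Pushing its interior slightly into a collar and taking $D_A \cup (\text{band}) \cup D_B$ gives an embedded disc in class $\a+\b$. In the diagram, the band crosses the $n$ full twists. Since it joins the two strands coherently, the twists become a set of full twists on a band connecting two tangles, and these can be undone by rotating $T_B$. The boundary is therefore the closure of $T_A$ followed by $T_B$, namely $A\# B$.

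\emph{Second item.} Here I would reverse the orientation of $D_B$, which gives class $-\b$, and band again. Now the band joins oppositely oriented strands through the twist region, so the $n$ full twists do not cancel. Instead they leave $2n$ half twists on two strands, plus one extra crossing ($\pm1$) coming from where the band attaches. This produces a $T_{2,2n\pm1}$ summand, and the result is $A\#B^r\#T_{2,2n\pm1}$. The sign depends on which side the band is attached.

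\emph{Third item.} I would take a parallel push-off $D_B'$ of $D_B$, displaced along a normal vector field that restricts on $B$ to some framing. Its algebraic intersection number with $D_B$ is $\b^2$ plus the framing correction. I would choose the framing $-\b^2$ relative to Seifert, so that $D_B'$ can be made disjoint from $D_B$; a normal section extends nowhere vanishing exactly when the relative Euler number vanishes. Then $B\cup B'$ is the $(2,-2\b^2)$ cable link, and $D_B\cup D_B'$ are disjoint discs, each also disjoint from $D_A$ after a small push, since $D_A$ and $D_B$ are disjoint. Banding $B$ to $B'$ with coherent orientation adds one half twist and yields the cable $B_{(2,-2\b^2\pm1)}$. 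However, the twist region with $A$ now links two strands. A band from $A$ to the cable, passing through the twist region, leaves $n$ full twists acting on the pair of parallel strands that link $A$. These contribute an additional $-2n$ to the cable twisting after the band is straightened, giving $A\#B_{(2,-2\b^2-2n\pm1)}$ in class $\a+2\b$.

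The main obstacle is the bookkeeping of twists and signs in the diagram, especially for the third item. There one must check carefully that the $n$ full twists, which are shared between $A$ and the doubled strand, transfer to the cable parameter with coefficient $-2n$ and not some other multiple. I would verify this by an explicit isotopy of the band picture, checking it in the case $T_A,T_B$ trivial against the linking number relation \eqref{eq:lkformula}.
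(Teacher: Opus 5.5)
Your proposal is correct and is essentially the argument behind \cite[Lemma 5.1]{MM:CP2CP2bar}, which this paper cites without reproving: band $D_A$ to $\pm D_B$ near the twist region for the first two items, and for the third take the push-off of $D_B$ with boundary framing $-\b^2$ (so it misses $D_B$ and $D_A$), band it to $D_B$ with a half-twisted band, and band the result to $D_A$. Two small remarks. First, since $n=-\lk(A,B)$ with right-handed twists, the strands in the twist region are antiparallel for $L$ and become \emph{parallel} (not oppositely oriented) after reversing $B$; this is exactly why the second-item band needs the half twist producing the $\pm1$. Second, the $-2n$ in the third item is pinned down by $\lk(A\#_b B, B')=\lk(A,B)+\lk(B,B')=-n-\b^2$, once you check (e.g.\ by retracting the capped $A$--$B$ hairpin back through the full twists) that $A$ becomes a local summand.
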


\subsection{The genus function on \texorpdfstring{$S^2 \times S^2$}{S2 x S2}}

Given a smooth, connected 4-manifold $X$, the 4-ball genus of a knot $K$ gives a first obstruction to the homology classes of $H_2(X^\circ, S^3)$ that are represented by a disc with boundary $K$. 
More precisely, if there is such a disc in homology class $\alpha$, by gluing it to a minimal surface for $K$ in $B^4$ we obtain a closed surface of genus $g_{B^4}(K)$ sitting in a homology class that by abuse of notation we still call $\alpha \in H_2(X) \cong H_2(X^\circ, S^3)$.
Then, knowledge of the genus function on $H_2(X)$ can give obstruction to such an $\alpha$.


\begin{theorem}[{\cite[Corollary 1.3]{R:minimalgenus}}]
\label{thm:Ruberman}
    The minimal genus of a smoothly embedded orientable surface in $S^2 \times S^2$ in homology class $(a_1,a_2) \in H_2(S^2 \times S^2) \cong \Z^2$, with respect to the obvious basis, is
    \[
    G_{S^2 \times S^2}(a_1, a_2) =
    \begin{cases}
        0 & \mbox{if $a_1\cdot a_2=0$} \\
        (|a_1|-1) \cdot (|a_2|-1) & \mbox{otherwise}
    \end{cases}
    \]
\end{theorem}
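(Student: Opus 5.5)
The plan is to prove matching upper and lower bounds. First I will use diffeomorphisms of $S^2 \times S^2$ to cut down the cases, keeping track of the fact that the genus function is invariant under \emph{all} self-diffeomorphisms, orientation-reversing ones included. The maps $(x,y)\mapsto(y,x)$ and $(x,y)\mapsto(x,\bar y)$, where $\bar y$ is a reflection of $S^2$, act on $H_2$ by $(a_1,a_2)\mapsto(a_2,a_1)$ and $(a_1,a_2)\mapsto(a_1,-a_2)$. Composing such maps with $-\mathrm{id}$ on $H_2$, which is realised by reversing the orientation of a surface, I may assume $a_1,a_2\ge 0$.

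\emph{Upper bounds.} If $a_1a_2=0$, say $a_2=0$, take $a_1$ disjoint parallel copies $S^2\times\{p_i\}$, all with the same orientation. Tubing them together along arcs gives a single embedded sphere in class $(a_1,0)$; for $a_1=0$ an unknotted sphere in a ball works. If $a_1,a_2\ge 1$, take $a_1$ parallel copies of $S^2\times\mathrm{pt}$ and $a_2$ parallel copies of $\mathrm{pt}\times S^2$. They meet transversely in $a_1a_2$ points, all positive. Smoothing each double point gives a connected surface $\Sigma$ with $\chi(\Sigma)=2(a_1+a_2)-2a_1a_2$. Hence
\[
g(\Sigma)=1-\tfrac{\chi}{2}=a_1a_2-a_1-a_2+1=(a_1-1)(a_2-1).
\]
Equivalently, $\Sigma$ is a smooth complex curve of bidegree $(a_1,a_2)$ in $\CP1\times\CP1$.

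\emph{Lower bound.} The case $a_1a_2=0$ is trivial, and so is the case where some $|a_i|=1$, since there the claimed bound is $0$. So assume $a_1,a_2\ge 2$. Then $(a_1,a_2)^2=2a_1a_2>0$, and the class pairs positively with the product Kähler form $\omega$ on $\CP1\times\CP1$. The canonical class is $K=(-2,-2)$, so the adjunction inequality
\[
2g-2\ \ge\ [\Sigma]^2+K\cdot[\Sigma]=2a_1a_2-2a_1-2a_2
\]
is exactly $g\ge(a_1-1)(a_2-1)$. I would justify adjunction here using the symplectic Thom conjecture of Ozsv\'ath--Szab\'o, which says that embedded symplectic surfaces are genus minimising in their class. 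The class is represented by a complex, hence symplectic, curve, so this applies directly, and it also covers $b^+=1$ manifolds such as $S^2\times S^2$. Alternatively, I would appeal to the Li--Li adjunction inequality for classes of positive square in rational surfaces. To do that I would blow up once, using $S^2\times S^2\#\bCP2\cong\CP2\#2\bCP2$, and check that the image of the class lies in the correct chamber.

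\emph{Main obstacle.} The main obstacle is the lower bound, because $b^+(S^2\times S^2)=1$. Seiberg--Witten invariants then depend on a chamber, and the usual adjunction inequality needs a nonvanishing invariant in the relevant chamber. I would handle this with wall-crossing: for a class of positive square, one takes a metric with small period point relative to $[\Sigma]$ and uses the nonvanishing of the Taubes invariant of $(\CP1\times\CP1,\omega)$ in that chamber. This is the standard Kronheimer--Mrowka-type argument, and it is where the hypothesis $a_1a_2\neq0$, which forces positive square after the sign normalisation, gets used.
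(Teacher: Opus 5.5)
Your proof is correct. The paper gives no proof of this statement; it only quotes Ruberman's Corollary~1.3. Your argument follows the standard route behind that result: explicit surfaces (equivalently, smooth complex curves) give the upper bound, a Seiberg--Witten adjunction inequality gives the lower bound for classes of positive square, and self-diffeomorphisms reduce everything else to that case.

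Your use of the orientation-reversing reflection $(a_1,a_2)\mapsto(a_1,-a_2)$ matters. It is what lets a result about classes of non-negative square (which is what Ruberman's paper is about) cover classes with $a_1a_2<0$, such as $(2,-2)$, which the paper needs in Lemma~\ref{lem:A1-3}.

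The one real difference is where the lower bound comes from. Ruberman's 1996 argument predates the symplectic Thom conjecture and relies on the $b^+=1$ adjunction inequality for non-negative-square classes in rational surfaces. You instead cite Ozsv\'ath--Szab\'o's later and stronger theorem. It applies verbatim, because for $a_1,a_2\ge 1$ the class is carried by a smooth complex, hence symplectic, curve of genus $(a_1-1)(a_2-1)$ in $S^2\times S^2$ with its K\"ahler structure.

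With that citation in place, your last paragraph on wall-crossing and the Li--Li alternative are not needed. The blow-up check you propose does work, though: under $S^2\times S^2\#\bCP2\cong\CP2\#2\bCP2$ the class becomes $(a_1+a_2)H-a_1E_1-a_2E_2$. This has the same square $2a_1a_2$ and the same pairing $-2(a_1+a_2)$ with the canonical class $-3H+E_1+E_2$.
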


Let us now return to the link $L = A \cup B$ whose sliceness in $S^2 \times S^2$ we are trying to obstruct. If either $A$ or $B$ were slice in the 4-ball, then a Kirby calculus argument shows that the link would be slice in $S^2 \times S^2$, see \cite[Proposition 1.3]{MM:CP2CP2bar}. The next simplest assumption we can make is the following:
\begin{enumerate}[font=\textbf]
    \item[({\crtcrossreflabel{A2}[it:A2]})]
    $g_{B^4}(A) = g_{B^4}(B) = 1$.
\end{enumerate}
This assumption implies that if $A$ bounds a smooth disc $D_A \subseteq X^\circ$, then $D_A$ must be in homology class $\a=(a_1,a_2)$ with $\min(|a_1|,|a_2|)\leq1$ or $|a_1|=|a_2|=2$. (Same goes for $D_B$.)

\subsection{Symmetries}

We will use symmetries to reduce the number of pairs of homology classes $(\a,\b)$ that we need to study. In addition to using the symmetries of $X:=S^2 \times S^2$, we will make the following assumption:
\begin{enumerate}[font=\textbf]
    \item[({\crtcrossreflabel{A3}[it:A3]})] The link $L$ has an ambient isotopy that swaps $A$ and $B$.
\end{enumerate}

With the above assumption, we list all the orientation-preserving symmetries we can use and their action:
\begin{enumerate}[font=\textbf]
    \item[({\crtcrossreflabel{S1}[it:S1]})]
    swapping the $S^2$ factors in $S^2 \times S^2$:
    \newline acts on $H_2(X)$ by $(a_1,a_2)\mapsto(a_2,a_1)$;
    \item[({\crtcrossreflabel{S2}[it:S2]})]
    inverting both $S^2$ factors:
    \newline acts on $H_2(X)$ by $(a_1,a_2)\mapsto(-a_1,-a_2)$;
    \item[({\crtcrossreflabel{S3}[it:S3]})]
    assumption \eqref{it:A3}:
    \newline acts on pairs $(\a,\b) \in H_2(X) \times H_2(X)$ by $(\a,\b) \mapsto (\b,\a)$.
\end{enumerate}

\begin{lemma}
\label{lem:A1-3}
Suppose that $L=A\cup B$ is a link satisfying \eqref{it:A1}-\eqref{it:A3} which is smoothly slice in $S^2 \times S^2$, with discs in homology classes $\a$ and $\b$. Then, up to symmetries we may assume that $\a$ (resp.\ $\b$) is as in the first column (resp.\ row) of Table \ref{tab:lk}, and that the corresponding table entry is not highlighted.
\end{lemma}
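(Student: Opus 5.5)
We argue directly from the constraints already established, since Table~\ref{tab:lk} has not been displayed yet. We take it to list representatives $\a=(a_1,a_2)$ and $\b=(b_1,b_2)$ of the classes allowed by \eqref{it:A2}, with each entry recording $\a\cdot\b=a_1b_2+a_2b_1$. An entry is highlighted when the corresponding pair is already excluded.

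\textbf{Step 1: classes allowed by the genus bound.} Cap $D_A$ with a minimal genus surface for $A$ in $B^4$. By \eqref{it:A2} this gives a closed genus-$1$ surface in class $\a$. Theorem~\ref{thm:Ruberman} then forces $(|a_1|-1)(|a_2|-1)\le 1$ whenever $a_1a_2\neq 0$. Hence either $\min(|a_1|,|a_2|)\le 1$, or $|a_1|=|a_2|=2$. The same holds for $\b$.

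\textbf{Step 2: reduce $\a$ by symmetries.} We use \eqref{it:S1} and \eqref{it:S2}, together with the reflection $(a_1,a_2)\mapsto(a_1,-a_2)$ (inverting one factor). That reflection reverses orientation, so we will not use it unless the table allows for it; otherwise we keep both sign patterns. Applying the symmetries simultaneously to $(\a,\b)$, we normalise $\a$ to one of $(0,k)$, $(1,k)$, $(2,2)$, $(2,-2)$ with $k\ge 0$ or suitable signs. Further applying \eqref{it:S3}, we may assume $\a$ is the ``smaller'' class in a fixed ordering. This matches the first column of the table, and $\b$ ranges over the first row.

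\textbf{Step 3: linking constraint.} Since $D_A$ and $D_B$ are disjoint, \eqref{eq:lkformula} gives $\a\cdot\b=-\lk(L)=n$. The twist parameter $n$ is fixed by \eqref{it:A1}. So only pairs with $\a\cdot\b=n$ survive, and these are exactly the non-highlighted entries (the highlighting marks $\a\cdot\b\neq n$, possibly together with the genus exclusions). If the table also highlights pairs excluded by Lemma~\ref{lem:sumsandcablings} combined with Theorem~\ref{thm:Ruberman}, we handle them the same way. For example, $A\#B$ has $g_{B^4}\le 2$ and bounds a disc in class $\a+\b$, so $G_{S^2\times S^2}(\a+\b)\le 2$.

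\textbf{Main obstacle.} The delicate point is the bookkeeping in Step~2. We must check that the group generated by \eqref{it:S1}--\eqref{it:S3} acts on the infinite families $(0,k)$ and $(1,k)$ compatibly with a single choice of representatives. We also must check that no orbit is lost, since only orientation-preserving symmetries are available and the sign of $\a\cdot\b$ must be preserved. Our plan is to verify that each symmetry preserves both the intersection pairing and the genus bound. Then every orbit of pairs satisfying Steps~1 and~3 meets the tabulated region.
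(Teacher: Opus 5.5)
\textbf{Verdict: genuine gap.} Your Step 1 and the first half of Step 2 agree with the paper: you cap off to get closed genus-1 surfaces, apply Theorem \ref{thm:Ruberman}, and use (S1)--(S2) to put $\a$ in the form $(0,x)$, $(1,x)$ or $(2,\pm2)$, while $\b$ ranges over all classes allowed by the genus bound.

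The gap is in what the highlighted cells mean and how they are discarded. In Table \ref{tab:lk} the entries are the values of $\a\cdot\b$. The highlighted cells (1,3), (1,4), (1,5), (2,5), (3,2), (3,4) are exactly those that are equivalent, by (S1)--(S3), to a non-highlighted cell. No linking-number information enters this lemma; it is used only in Lemma \ref{lem:A1-5}, together with (A4). Your Step 3 claim that the non-highlighted entries are exactly the pairs with $\a\cdot\b=n$ is false. Cell (1,1) is not highlighted but has $\a\cdot\b\equiv 0$, whereas the highlighted cell (1,3) has $\a\cdot\b=\pm x$, which can equal any $n$. So Step 3 says nothing about the highlighting. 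The genus bound on $\a+\b$ that you mention also belongs to a later lemma (Lemma \ref{lem:A1-5v2}), not this one.

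What is actually needed is the bookkeeping you defer as the ``main obstacle'': an explicit check that each highlighted cell is carried to a non-highlighted one. The paper gives these reductions:
\begin{itemize}
\item (1,3)$\to$(2,1) via (S3) and possibly (S2);
\item (1,4)$\to$(2,2) via (S3), (S1) and possibly (S2);
\item (1,5)$\to$(3,1) and (2,5)$\to$(3,3) via (S3) and possibly (S2);
\item (3,2)$\to$(3,1) and (3,4)$\to$(3,3) via (S1) and possibly (S2).
\end{itemize}
The last two do not use (S3) at all. They use the residual symmetries that preserve the normalised form $(2,\pm2)$ of $\a$ while moving $\b$ from $(y,0)$ or $(y,\pm1)$ to $(0,\pm y)$ or $(\pm1,\pm y)$. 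Your plan to use (S3) to make $\a$ ``the smaller class in a fixed ordering'' does not capture these. Without a specified ordering, it also does not determine which cells may be discarded. Each individual reduction is immediate, since (S1)--(S3) preserve both the genus restriction and $\a\cdot\b$. Supplying this cell-by-cell verification in place of your Step 3 gives the lemma.
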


\begin{table}[]
    \centering
    \begin{tabular}{c|c|c|c|c|c}
     & $(0,y)$ & $(y,0)$ & $(\pm1,y)$ & $(y,\pm1)$ & $(\pm2,\pm2)$\\
    \hline
    $(0,x)$ & 0 & $xy$ & \cellcolor{Tan1}$\pm x$ & \cellcolor{Tan1}$xy$ & \cellcolor{Tan1}$\pm2x$ \\
    \hline
    $(1,x)$ & $y$ & $xy$ & $y \pm x$ & $xy \pm 1$ & \cellcolor{Tan1}$\pm2\pm2x$ \\
    \hline
    $(2,\pm2)$ & $2y$ & \cellcolor{Tan1}$\pm2y$ & $2y\pm2$ & \cellcolor{Tan1}$\pm2\pm2y$ & $0,\pm8$ \\
    \vspace{0pt}
    \end{tabular}
    \caption{The table shows the possible pairs of homology classes $(\a,\b)$, for $x,y \in \Z$, under assumptions \eqref{it:A1}-\eqref{it:A3}. The value of the cell is the intersection numbers $\a\cdot\b$. The highlighted cells (1,3),(1,4),(1,5), (2,5), (3,2) and (3,4) can be discarded by symmetry considerations.}
    \label{tab:lk}
\end{table}

\begin{proof}
By \eqref{it:A2}, we know that $g_{B^4}(A) = g_{B^4}(B) = 1$. We can cap off the slice discs $D_A$ and $D_B$ in $X^\circ$ with a minimum genus surface in $B^4$ for (the mirrors of) $A$ and $B$, and obtain closed surfaces of genus 1, smoothly embedded in $S^2 \times S^2$ in homology classes $\a$ and $\b$ respectively.

By Ruberman's result (Theorem \ref{thm:Ruberman}), we know that $\a$ and $\b$ must be of the form $(a_1,a_2)$ so that one of the following three conditions hold:
\begin{itemize}
    \item $|a_1| \leq 1$, or
    \item $|a_2| \leq 1$, or
    \item $|a_1|=|a_2|=2$.
\end{itemize}
This already gives the possible values of $\b$ in Table \ref{tab:lk}. For the possible values of $\a$, using symmetries \eqref{it:S1}-\eqref{it:S2}, we can further assume that $\a=(a_1,a_2)$ with $a_1=0,1,2$, and we therefore obtain Table \ref{tab:lk}.

The following cells are equivalent to other cells in the table via symmetries:
\begin{itemize}
\item cell (1,3) is equivalent to cell (2,1) via \eqref{it:S3} and possibly \eqref{it:S2};
\item cell (1,4) is equivalent to cell (2,2) via \eqref{it:S3}, \eqref{it:S1}, and possibly \eqref{it:S2};
\item cell (1,5) is equivalent to cell (3,1) via \eqref{it:S3} and possibly \eqref{it:S2};
\item cell (2,5) is equivalent to cell (3,3) via \eqref{it:S3} and possibly \eqref{it:S2};
\item cell (3,2) is equivalent to cell (3,1) via \eqref{it:S1} and possibly \eqref{it:S2};
\item cell (3,4) is equivalent to cell (3,3) via \eqref{it:S1} and possibly \eqref{it:S2}. \qedhere
\end{itemize}
\end{proof}

\subsection{Linking number and Arf invariant}

The next restriction that we will put on $L$ is the linking number of the two components, which is (up to sign) the intersection number $\a\cdot\b$. At the same time we also impose an assumption on the Arf invariant of $A$ and $B$, which will simplify our analysis.

\begin{enumerate}[font=\textbf]
    \item[({\crtcrossreflabel{A4}[it:A4]})]
    $\lk(A,B) = -4$.
    \item[({\crtcrossreflabel{A5}[it:A5]})]
    $\Arf A = \Arf B = 1$.
\end{enumerate}

\begin{lemma}
\label{lem:A1-5}
Suppose that $L=A\cup B$ is a link satisfying \eqref{it:A1}-\eqref{it:A5} which is smoothly slice in $S^2 \times S^2$, with discs in homology classes $\a$ and $\b$. Then, up to symmetries, the pair $(\a,\b)$ belongs to one of two infinite families
\begin{enumerate}
    \item \label{it:inf1} $((1,x), (1,4-x))$, for $x \in \Z$,
    \item \label{it:inf2} $((1,x), (-1,4+x))$, for $x \in \Z$,
\end{enumerate}
or is one of the following four sporadic cases:
\begin{enumerate}[resume]
    \item \label{it:spo1} $((2,2), (1,1))$,
    \item \label{it:spo2} $((2,2), (-1,3))$,
    \item \label{it:spo3} $((2,-2), (1,3))$,
    \item \label{it:spo4} $((2,-2), (-1,1))$.
\end{enumerate}
\end{lemma}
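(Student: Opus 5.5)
We start from Lemma \ref{lem:A1-3}: up to symmetries, $(\a,\b)$ lies in one of the non-highlighted cells of Table \ref{tab:lk}. By \eqref{it:A1} and \eqref{it:A4}, disjointness of $D_A$ and $D_B$ together with \eqref{eq:lkformula} forces $\a\cdot\b = n = -\lk(A,B) = 4$. By \eqref{it:A5} we will also use Theorem \ref{thm:Arf} to discard discs in characteristic classes. On $S^2\times S^2$ the intersection form is even, so a class $(a_1,a_2)$ is characteristic exactly when both $a_1,a_2$ are even. For a disc we have $\Arf(X,D)=0$, and $\ks(X)=0$ and $\sigma(X)=0$, so Theorem \ref{thm:Arf} gives $-(2a_1a_2)/8 \equiv \Arf(K) = 1 \pmod 2$. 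In other words, $a_1a_2 \equiv 4 \pmod 8$ is required for an even class. This rules out $(0,x)$ with $x$ even (since $a_1a_2=0$), and also $(2,\pm2)$ and $(\pm2,\pm2)$, because $a_1a_2=\pm4$ satisfies the condition. So the Arf constraint kills exactly the classes with both coordinates even and $a_1a_2\equiv 0\pmod 8$.

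We then go through the non-highlighted cells one by one, imposing $\a\cdot\b=4$. Recall $(a_1,a_2)\cdot(b_1,b_2)=a_1b_2+a_2b_1$.
\begin{itemize}
\item Cell (1,1) gives $0\neq 4$, so it is excluded.
\item Cell (1,2), $((0,x),(y,0))$, needs $xy=4$. Here Ruberman forces nothing more, and the Arf condition applies only when both classes are even. We handle this cell by the symmetries: applying \eqref{it:S1}, \eqref{it:S3} and \eqref{it:S2}, the pair becomes a pair with one class of the form $(0,\cdot)$, which is still in column 1. For the statement to hold, this cell must be excluded. With $xy=4$, the options are $x,y\in\{\pm1,\pm2,\pm4\}$. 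Options with an even entry give classes $(0,\text{even})$ or $(\text{even},0)$, which have $a_1a_2=0$ and are killed by Arf. The remaining options $(0,\pm1),(\pm1,0)$ give $xy=1\neq 4$. So cell (1,2) is excluded.
\item Row 2 with $\a=(1,x)$: cell $(2,1)$ with $\b=(0,y)$ gives $y=4$, and $(0,4)$ is even with product $0$, so Arf excludes it. Cell $(2,2)$ with $\b=(y,0)$ needs $xy=4$, and $\b$ is even only if $y$ is even. The cases $y=\pm1$ give $\b=(\pm1,0)$, which belongs to the family $(\pm1,y')$ and is absorbed into cell (2,3). The cases $y$ even are killed by Arf. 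Cell (2,3), $\b=(\pm1,y)$, gives $y\pm x=4$, which yields families \eqref{it:inf1} and \eqref{it:inf2}. Cell (2,4), $\b=(y,\pm1)$, gives $xy\pm1=4$; either $y=\pm1$, which is absorbed into (2,3), or $y$ is even, in which case $xy$ is even and $xy\pm1$ is odd, a contradiction.
\item Row 3 with $\a=(2,\pm2)$ is killed by Arf, unless $a_1a_2=\pm4$, which indeed satisfies $\equiv 4 \pmod 8$. So $\a=(2,\pm2)$ survives. Cell (3,1) gives $2y=4$, so $\b=(0,2)$, which Arf kills. Cell (3,3) gives $2y\pm2=4$, that is, $\b=(1,1)$ or $(-1,3)$ for $\a=(2,2)$, and $\b=(1,3)$ or $(-1,1)$ for $\a=(2,-2)$. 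These are exactly the sporadic cases \eqref{it:spo1}--\eqref{it:spo4}. Cell (3,5) has $\a\cdot\b\in\{0,\pm8\}$, so it is excluded.
\end{itemize}

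The main obstacle is the bookkeeping. We need to check that the boundary cases where $y=\pm1$ in columns $(y,0)$ and $(y,\pm1)$ are correctly absorbed into column $(\pm1,y)$, and that the sign choices reduce via \eqref{it:S2} to $a_1=1$ and $a_1=2$ as listed. Before completing the case check, we will carefully verify the parity computation in the Arf step, namely $[\Sigma]^2=2a_1a_2$ and the requirement $a_1a_2\equiv 4\pmod 8$.
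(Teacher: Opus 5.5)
Your approach matches the paper's. You start from the non-highlighted cells of Table~\ref{tab:lk}, impose $\a\cdot\b=4$, and use Theorem~\ref{thm:Arf} (with $(a_1,a_2)^2=2a_1a_2$, so a class with both coordinates even needs $a_1a_2\equiv 4 \pmod 8$) to discard characteristic classes of square zero. Every cell except $(2,4)$ is handled correctly.

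Cell $(2,4)$, however, rests on a false dichotomy. There $xy\pm1=4$ means $xy\in\{3,5\}$, and you split into ``$y=\pm1$'' versus ``$y$ even''. The second branch is vacuous, since an odd product forces both $x$ and $y$ to be odd. More importantly, the split omits the real remaining case $x=\pm1$, $y\in\{\pm3,\pm5\}$, which does produce solutions: $((1,1),(3,1))$, $((1,-1),(-3,1))$, $((1,1),(5,-1))$, $((1,-1),(-5,-1))$. In these pairs $\b$ is not of the form $(\pm1,y')$, so they are not absorbed into cell $(2,3)$ by your argument. The pattern you reused from cell $(2,2)$ works there only because the odd divisors of $4$ are $\pm1$.

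The repair is the one the paper uses. Since $3$ and $5$ are prime, one of $x,y$ is $\pm1$. If it is $x$, apply \eqref{it:S1} to both classes, then \eqref{it:S2} if needed so that $\a$ has first coordinate $1$. This keeps $\a$ of the form $(1,x')$ and makes $\b$ of the form $(\pm1,y')$, so the pair lies in cell $(2,3)$ and hence in \eqref{it:inf1} or \eqref{it:inf2}. For instance, $((1,1),(5,-1))\mapsto((1,1),(-1,5))$ is family \eqref{it:inf2} with $x=1$, and $((1,-1),(-3,1))\mapsto((1,-1),(-1,3))$ is family \eqref{it:inf2} with $x=-1$.

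Separately, two of your sentences contradict your own (correct) conclusion that $a_1a_2=\pm4$ passes the test and $(2,\pm2)$ survives. These are the claim that the Arf test ``rules out'' $(2,\pm2)$ and the claim that row 3 ``is killed by Arf''; both should be reworded.
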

\begin{proof}
By Lemma \ref{lem:A1-3}, we know that the possible combinations of $\a$ and $\b$ are the entries of Table \ref{tab:lk} that are not highlighted. By \eqref{it:A4} and Equation \eqref{eq:lkformula}, we know that the intersection number $\a\cdot\b$, computed in Table \ref{tab:lk}, must be 4.

We study the possible combinations that yield $\a\cdot\b=4$ by analysing such an equation for each non-colored cell of the table separately. We will start with entry $(2,3)$ of the table, which yields the two infinite families.
\begin{itemize}
    \item Entry $(2,3)$: $y\pm x=4$.
    \newline The equation has infinitely many solutions, namely $y=4 \mp x$. Thus, we get the two infinite families in the statement of the lemma.
    \item Entry $(1,1)$: $0 = 4$.
    \newline The equation has no solutions.
    \item Entry $(2,1)$: $y = 4$.
    \newline This implies that $\b=(0,4)$, which is a characteristic class with $\b^2=0$. By Theorem \ref{thm:Arf}, we deduce that $\Arf B = 0$, contradicting \eqref{it:A5}. Thus, we do not get any new pair $(\a,\b)$.
    \item Entry $(3,1)$: $2y = 4$.
    \newline This implies that $\b=(0,2)$, which is again a characteristic class with $\b^2=0$. Thus, we do not get any new pair $(\a,\b)$.
    \item Entry $(1,2)$: $xy = 4$.
    \newline Every solution of the equations has either $x$ even or $y$ even, and therefore either $\a$ or $\b$ (or both) is characteristic and squares to 0, again contradicting \eqref{it:A5} as in the previous two points. Thus, no solutions of $xy = 4$ yield a possible pair $(\a,\b)$.
    \item Entry $(2,2)$: $xy = 4$.
    \newline Following the same reasoning of the previous points, we can rule out all solutions where $y$ is even. Thus, we get two possible solutions, namely $x=4, y=1$ and $x=-4, y=-1$. These yield the pairs $((1,4),(1,0))$ and $((1,-4),(-1,0))$. However, these are not new solutions, because they belong to the infinite families (1) and (2) respectively.
    \item Entry $(3,3)$: $2y\pm2 = 4$.
    \newline Depending on the choice of the signs in $(2,\pm2)$ and $(\pm1, y)$, this equation yields exactly the four sporadic cases as possible solutions.
    \item Entry $(2,4)$: $xy\pm1 = 4$.
    \newline The two equations are $xy=3$ and $xy=5$. Since both $3$ and $5$ are prime numbers, one of $x$ and $y$ must be $\pm1$. After perhaps applying symmetries \eqref{it:S1}-\eqref{it:S3}, we may assume that $y=\pm1$. Thus, we can assume that $\b$ is of the form $(\pm1, y')$, so any solution coming from this entry is already contained in the infinite families coming from entry $(2,3)$.
    \item Entry $(3,5)$: $0, \pm 8 = 4$.
    \newline This equation has no solutions.\qedhere
\end{itemize}
\end{proof}

\subsection{The genus function again}

From the list of cases provided by Lemma \ref{lem:A1-5} we can immediately rule out cases \eqref{it:inf1} and \eqref{it:spo1} using the genus function again.

\begin{lemma}
\label{lem:A1-5v2}
Under the assumptions of Lemma \ref{lem:A1-5}, the pair $(\a,\b)$ does not belong to the infinite family \eqref{it:inf1} or to the sporadic case \eqref{it:spo1}.
\end{lemma}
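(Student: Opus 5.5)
We will use Lemma \ref{lem:sumsandcablings} to turn the pair of disjoint discs into a single disc bounded by a connected sum, and then rule it out with Ruberman's genus function (Theorem \ref{thm:Ruberman}). By the first bullet of Lemma \ref{lem:sumsandcablings}, the knot $A \# B$ bounds a smooth disc in $X^\circ$ in the class $\a+\b$. Capping this disc off with a minimal genus surface in $B^4$ for (the mirror of) $A\#B$ gives a closed smooth surface in $S^2\times S^2$ in class $\a+\b$. Its genus is at most $g_{B^4}(A\#B)\le g_{B^4}(A)+g_{B^4}(B)=2$, by \eqref{it:A2}.

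For family \eqref{it:inf1} we have $\a+\b=(1,x)+(1,4-x)=(2,4)$. Theorem \ref{thm:Ruberman} gives $G_{S^2\times S^2}(2,4)=(2-1)(4-1)=3>2$, a contradiction, uniformly in $x$. For the sporadic case \eqref{it:spo1} we have $\a+\b=(2,2)+(1,1)=(3,3)$, with minimal genus $(3-1)(3-1)=4>2$, again a contradiction. So neither case can occur.

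The only step needing care is that the genus bound is an inequality in the right direction. We never need $g_{B^4}(A\#B)$ exactly, only the subadditive upper bound $g_{B^4}(A\#B)\le 2$, and that bound is strictly smaller than Ruberman's lower bound. We should also confirm that the symmetry reductions behind Lemma \ref{lem:A1-5} do not affect this. They do not: the argument is applied directly to the pair $(\a,\b)$ as listed, and the symmetries \eqref{it:S1}--\eqref{it:S3} preserve the genus function. No step should be a genuine obstacle; the main point is choosing $\a+\b$ (rather than $\a-\b$ or $\a+2\b$) as the class that is large in both coordinates in these two cases.
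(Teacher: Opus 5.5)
Your proposal is correct and follows essentially the same argument as the paper. You use the first bullet of Lemma \ref{lem:sumsandcablings} to get a disc for $A\#B$ in class $\a+\b$, cap it off with a surface of genus at most $2$ using \eqref{it:A2}, and contradict Theorem \ref{thm:Ruberman}, since $(2,4)$ and $(3,3)$ have minimal genus $3$ and $4$ respectively.
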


\begin{proof}
We claim that if $L = A \cup B$ were smoothly slice in $X$ with discs in homology classes $\a$ and $\b$, then the homology class $\a + \b$ would be represented by a closed, smoothly embedded, genus-2 surface.

Indeed, by Lemma \ref{lem:sumsandcablings} the knot $A \# B$ would bound a smooth disc in $X^\circ$ in homology class $\a + \b$. Since $g_4(A \# B) \leq 2$ by assumption \eqref{it:A2}, we would be able to construct the desired closed genus-2 surface, smoothly embedded in $X$ in homology class $\a + \b$, by capping off a $g_4$-minimising surface for $A\# B$ in $B^4$ with the slice disc in $X^\circ$.

However, if the pair $(\a,\b)$ belongs to the infinite family \eqref{it:inf1}, then $\a+\b = (2,4)$, and if $(\a,\b)$ belongs to the sporadic case \eqref{it:spo1}, then $\a+\b = (3,3)$. Neither of these homology classes are represented by a smooth genus-2 surface (see Theorem \ref{thm:Ruberman}).
\end{proof}

\subsection{Levine-Tristram signatures}
The last step in order to prove Theorem \ref{thm:S2xS2} is to obstruct the existence of pairs of discs $D_A$ and $D_B$ in the homology classes given by Lemma \ref{lem:A1-5}. We have already taken care of two cases in Lemma \ref{lem:A1-5v2}. For the remaining cases we will use the obstruction from the Levine-Tristram signatures (Theorem \ref{thm:signature}). We will choose a value of the classical signature of $A$ and $B$ to obstruct the infinite family \eqref{it:inf2} from Lemma \ref{lem:A1-5}, then we will use other signatures to obstruct the remaining sporadic cases.

We recall that $\sigma_K(\cdot)$ denotes the signature function of a knot $K$. We also introduce the notation $\zeta_m:=e^{2\pi i/m}$. We remark that $\zeta_m$ denotes this very specific root of unity, not just any primitive $m$-th root of unity. For example, $\sigma_K(\zeta_2) = \sigma_K(-1)$ is the classical signature of $K$.

These are the assumptions we make on the signature functions:

\begin{enumerate}[font=\textbf]
    \item[({\crtcrossreflabel{A6}[it:A6]})] $\sigma_A(\zeta_2) = \sigma_B(\zeta_2) = 2$.
    \item[({\crtcrossreflabel{A7}[it:A7]})] $\sigma_A(\zeta_4) = \sigma_B(\zeta_4) = 2$.
    \item[({\crtcrossreflabel{A8}[it:A8]})] $\sigma_A(\zeta_8) = \sigma_B(\zeta_8) = 2$.
\end{enumerate}

\begin{lemma}
    \label{lem:A6}
Suppose that $L=A\cup B$ is a link satisfying \eqref{it:A1} and \eqref{it:A6} which is smoothly slice in $S^2 \times S^2$, with discs in homology classes $\a$ and $\b$. Then the pair $(\a,\b)$ does not belong to the infinite family \eqref{it:inf2} of Lemma \ref{lem:A1-5}, nor is one of the sporadic cases \eqref{it:spo3} and \eqref{it:spo4}.
\end{lemma}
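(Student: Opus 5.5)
The plan is to apply the Levine--Tristram obstruction, Theorem \ref{thm:signature}, only at the classical signature, i.e.\ with $m=2$ and $r=1$. In that case the inequality reads
\[
\left|\sigma_K(\zeta_2) + \sigma(X) - \tfrac{[\Sigma]^2}{2}\right| \leq b_2(X) + 2g .
\]
For $X=S^2\times S^2$ we have $\sigma(X)=0$ and $b_2(X)=2$, and we will only use discs, so $g=0$. The inequality therefore becomes $|\sigma_K(\zeta_2) - [\Sigma]^2/2| \leq 2$, which applies whenever $[\Sigma]$ is divisible by $2$. We will treat the infinite family and the two sporadic cases separately, in each case finding a disc whose homology class is even.

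\emph{Infinite family \eqref{it:inf2}.} Here $\a=(1,x)$ and $\b=(-1,4+x)$, so $\a+\b=(0,2x+4)$. This class is divisible by $2$ and has square $0$; this includes the case $x=-2$, where $\a+\b=0$ and is trivially divisible by $2$. By the first item of Lemma \ref{lem:sumsandcablings}, which needs \eqref{it:A1}, the knot $A\# B$ bounds a smooth disc in $X^\circ$ in the class $\a+\b$. Since signature is additive under connected sum, \eqref{it:A6} gives $\sigma_{A\# B}(\zeta_2)=4$. The inequality then becomes $|4+0-0|\leq 2$, which is false, so this family cannot occur.

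\emph{Sporadic cases \eqref{it:spo3} and \eqref{it:spo4}.} In both cases $\a=(2,-2)$, so no combination with $\b$ is needed: we apply Theorem \ref{thm:signature} directly to the disc $D_A$. The class $\a$ is divisible by $2$ and $\a^2=-8$, so $[\Sigma]^2/2=-4$. By \eqref{it:A6} the left-hand side is $|2+4|=6$, which exceeds $2$, a contradiction. This is exactly why case \eqref{it:spo1}, where $\a=(2,2)$, is not obstructed here: there the left-hand side is $|2-4|=2$, and the inequality is satisfied.

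The main thing to get right is the sign convention in Theorem \ref{thm:signature}, namely how $\sigma_K$ and $[\Sigma]^2$ enter with the boundary orientation of $X^\circ$. We will take the statement literally as given. We will also double check that $\a^2$ is computed with the hyperbolic form on $S^2\times S^2$, so that $(a_1,a_2)^2=2a_1a_2$. Beyond that, the argument is a direct substitution.
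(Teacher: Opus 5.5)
Your proposal is correct and matches the paper's proof. You specialize Theorem \ref{thm:signature} to $m=2$, $r=1$ to get $|\sigma_K(\zeta_2)-[D]^2/2|\le 2$. You rule out family \eqref{it:inf2} using the disc for $A\# B$ in the even, square-zero class $(0,2x+4)$, and you rule out cases \eqref{it:spo3} and \eqref{it:spo4} by applying the same inequality directly to $D_A$ in class $(2,-2)$, which has square $-8$.
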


\begin{proof}
The obstruction that we use in this lemma is Theorem \ref{thm:signature}, which, in the special case of $K$ bounding a disc $D$ in $X^\circ$ such that $[D]$ is 2-divisible, says that
\begin{equation}
\label{eq:signatureS2xS2}
\left|\sigma_{K}(\zeta_2) -\frac{[D]^2}2 \right| \leq 2.
\end{equation}


Assume by contradiction that $(\a,\b)$ is in the infinite family \eqref{it:inf2}, i.e.\ it is of the form $((1,x),(-1,4+x))$ for some $x \in \Z$. By Lemma \ref{lem:sumsandcablings} the knot $A \# B$ bounds a smooth disc $D_+$ in $X^\circ$ in homology class $\a+\b=(0,4+2x)$.
By applying Equation \eqref{eq:signatureS2xS2} with $K=A\#B$ and $D=D_+$ we obtain
\[
\left|\sigma_{A\#B}(\zeta_2) -\frac{[D_+]^2}2 \right| \leq 2,
\]
which is a contradiction since $\sigma_{A\#B}(\zeta_2) = \sigma_{A}(\zeta_2) + \sigma_{B}(\zeta_2) = 4$ and $[D_+]^2 = 0$.

As for the sporadic cases \eqref{it:spo3} and \eqref{it:spo4}, we obstruct them by showing that $A$ cannot bound a disc $D_A$ in homology class $\a = (2,-2)$. Indeed, if such a disc existed, we could apply Equation \eqref{eq:signatureS2xS2}, which in this case yields the contradiction
\[
\left|2 -\frac{-8}2 \right| \leq 2. \qedhere
\]
\end{proof}

%

\begin{lemma}
    \label{lem:A7-8}
Suppose that $L=A\cup B$ is a link satisfying \eqref{it:A1}, \eqref{it:A4}, \eqref{it:A7} and \eqref{it:A8} which is smoothly slice in $S^2 \times S^2$, with discs in homology classes $\a$ and $\b$. Then $(\a,\b) \neq ((2,2),(-1,3))$, which is the sporadic case \eqref{it:spo2} of Lemma \ref{lem:A1-5}.
\end{lemma}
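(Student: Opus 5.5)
The plan is to apply the Levine–Tristram obstruction (Theorem \ref{thm:signature}) to the cabled connected sum from Lemma \ref{lem:sumsandcablings}, in the homology class $\a+2\b$. Assume by contradiction that $\a=(2,2)$ and $\b=(-1,3)$. Then
\[
\a+2\b=(0,8),
\]
which is divisible by the prime power $m=8$ and has $(\a+2\b)^2=0$. Moreover $\b^2=2\cdot(-1)\cdot 3=-6$, and by \eqref{it:A1} and \eqref{it:A4} we have $n=-\lk(A,B)=4$. So the cable parameter in Lemma \ref{lem:sumsandcablings} is $-2\b^2-2n\pm1=12-8\pm1\in\{3,5\}$. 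We choose the sign giving $3$. The lemma then says that $K:=A\#B_{(2,3)}$ bounds a smooth disc $D$ in $X^\circ$ with $[D]=(0,8)$.

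Next apply Theorem \ref{thm:signature} with $m=8$, $r=1$, $g=0$, $\sigma(X)=0$, $b_2(X)=2$ and $[D]^2=0$. This gives $|\sigma_K(\zeta_8)|\le 2$. Additivity of signatures under connected sum and Litherland's formula (Theorem \ref{thm:satellite}) for a pattern of winding number $2$ give
\[
\sigma_K(\zeta_8)=\sigma_A(\zeta_8)+\sigma_B(\zeta_8^2)+\sigma_{T_{2,3}}(\zeta_8)=\sigma_A(\zeta_8)+\sigma_B(\zeta_4)+\sigma_{T_{2,3}}(\zeta_8).
\]
Here we use that $\zeta_8^2=\zeta_4$, and that the pattern evaluated on the unknot is the torus knot $T_{2,3}$. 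By \eqref{it:A8} and \eqref{it:A7} the first two terms equal $2+2=4$.

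It remains to see that $\sigma_{T_{2,3}}(\zeta_8)=0$. The signature function of $T_{2,3}$ is $0$ on the arc $e^{2\pi i\theta}$ with $|\theta|<1/6$, since its first jump occurs at the root $e^{2\pi i/6}$ of the Alexander polynomial $t^2-t+1$. Since $1/8<1/6$, the value at $\zeta_8$ is $0$, independent of the sign convention for torus-knot signatures. Hence $\sigma_K(\zeta_8)=4$, contradicting $|\sigma_K(\zeta_8)|\le 2$.

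The main point to check carefully is the bookkeeping in Lemma \ref{lem:sumsandcablings}. One must confirm that both signs $\pm1$ are genuinely available, so that we may pick the $(2,3)$-cable rather than the $(2,5)$-cable; the latter could contribute $\pm2$ at $\zeta_8$ and possibly cancel. One must also confirm that the homology class is $\a+2\b$ with $\b^2$ computed in the hyperbolic form, and that the orientation and mirror conventions for $B$ in the cable do not change which signature values enter. If only the $(2,5)$-cable were available, I would fall back on evaluating at $r=2$, i.e.\ at $\zeta_4$, using \eqref{it:A6} and \eqref{it:A7}. I would also consider swapping the roles of $A$ and $B$ via \eqref{it:A3}, and pick whichever combination gives torus-knot signature of the same sign as $4$.
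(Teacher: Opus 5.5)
Your proposal is correct and matches the paper's proof step for step: the $(2,3)$-cable from the third bullet of Lemma \ref{lem:sumsandcablings} in class $\a+2\b=(0,8)$, then Theorem \ref{thm:signature} with $m=8$, $r=1$, and Litherland's formula, giving $\sigma_K(\zeta_8)=2+2+0=4>2$. Your closing worries and fallback plans are unnecessary: that lemma already provides both signs $\pm1$, and the paper uses exactly the $(2,3)$-cable.
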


\begin{proof}
Assume by contradiction that $(\a,\b) = ((2,2),(-1,3))$.
By Lemma \ref{lem:sumsandcablings}, third bullet point, the knot $A\#(B_{(2,3)})$ bounds a smooth disc $D$ in $X^\circ$ in homology class $(0,8)$, which is 8-divisible. (For the computation of the cabling coefficient, we used $\b^2=-6$ and $n=4$.) Thus, we can apply Theorem \ref{thm:signature} with $m=8$ and $r=1$, and by noting that $[D]^2 = 0$ we obtain 
\begin{equation}
    \label{eq:lemA7-8}
\left|\sigma_{A\#(B_{(2,3)})} (\zeta_8)\right| \leq 2.
\end{equation}
We compute $\sigma_{A\#(B_{(2,3)})} (\zeta_8)$ using Theorem \ref{thm:satellite}:
\begin{align*}
\sigma_{A\#(B_{(2,3)})} (\zeta_8)
&= \sigma_{A} (\zeta_8) + \sigma_{B_{(2,3)}} (\zeta_8) \\
&= \sigma_{A} (\zeta_8) + \sigma_{B} (\zeta_4) + \sigma_{T_{2,3}} (\zeta_8) \\
&= 2 + 2 + 0 = 4,
\end{align*}
where in the last step we used \eqref{it:A7} and \eqref{it:A8}, and the computation for $T_{2,3}$ is straightforward from the definition.
Thus, we have obtained a contradiction with Equation \eqref{eq:lemA7-8}.
\end{proof}

\subsection{Proof of Theorem \ref{thm:S2xS2}}

Theorem \ref{thm:S2xS2} is a consequence of the following more general theorem.

\begin{theorem}
\label{thm:S2xS2assumptions}
Let $L$ be a 2-component link in $S^3$ satisfying assumptions \eqref{it:A1}-\eqref{it:A8}.
Then $L$ is not smoothly slice in $S^2 \times S^2$.
\end{theorem}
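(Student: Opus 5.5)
The proof will be by contradiction, assembling the lemmas already proved in this section. Suppose $L = A \cup B$ satisfies \eqref{it:A1}--\eqref{it:A8} and bounds disjoint smooth discs $D_A, D_B \subset X^\circ$, with homology classes $\a = [D_A]$ and $\b = [D_B]$. We first use assumptions \eqref{it:A1}--\eqref{it:A5} together with Lemma \ref{lem:A1-5}. This shows that, after applying the symmetries \eqref{it:S1}--\eqref{it:S3}, the pair $(\a,\b)$ falls into one of six possibilities: the two infinite families \eqref{it:inf1} and \eqref{it:inf2}, or one of the four sporadic cases \eqref{it:spo1}--\eqref{it:spo4}.

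One point needs care before the remaining lemmas can be applied. The symmetries act either by a self-diffeomorphism of $X$ or by the isotopy of \eqref{it:A3} that swaps $A$ and $B$. In each case the result is again a pair of disjoint smooth slice discs for a link satisfying the same hypotheses. For the swap this is immediate, because every assumption is symmetric in $A$ and $B$. For \eqref{it:S1} and \eqref{it:S2} we note that they are orientation-preserving diffeomorphisms of $X$ that can be isotoped to fix a ball, so the boundary link is unchanged. Hence we may genuinely replace $(\a,\b)$ by its normalized representative, and feed that representative into the subsequent lemmas.

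We then dispose of the six cases one at a time:
\begin{itemize}
\item Family \eqref{it:inf1} and case \eqref{it:spo1} are excluded by Lemma \ref{lem:A1-5v2}, which uses \eqref{it:A2} together with Theorem \ref{thm:Ruberman} applied to $\a+\b$.
\item Family \eqref{it:inf2} and cases \eqref{it:spo3}, \eqref{it:spo4} are excluded by Lemma \ref{lem:A6}, which uses \eqref{it:A6}.
\item Case \eqref{it:spo2} is excluded by Lemma \ref{lem:A7-8}, which uses \eqref{it:A4}, \eqref{it:A7} and \eqref{it:A8}.
\end{itemize}
No pair $(\a,\b)$ survives, which contradicts the assumed sliceness of $L$.

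The main point to verify is that this case list really is exhaustive. That rests entirely on Lemmas \ref{lem:A1-3} and \ref{lem:A1-5}, which are already established. The only additional check is the compatibility of the symmetry reductions with the later lemmas, and this reduces to the invariance of the hypotheses under swapping $A$ and $B$ and under diffeomorphisms of $X$, as explained above.
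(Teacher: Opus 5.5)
Your proposal is correct and matches the paper's proof. Lemma \ref{lem:A1-5} reduces to the two infinite families and four sporadic cases, and Lemmas \ref{lem:A1-5v2}, \ref{lem:A6} and \ref{lem:A7-8} exclude them. Your added paragraph, which shows that the symmetry normalizations yield genuine slice discs for $L$ (by isotoping the diffeomorphisms of $X$ to fix a ball, and via the swap), makes explicit what the paper leaves implicit in the phrase ``up to symmetries''.
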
 
\begin{proof}
Let $L = A \cup B$, and suppose by contradiction that it bounds two disjoint smooth discs in homology classes $\a$ and $\b$, respectively.
Then, by Lemma \ref{lem:A1-5} the pair $(\a,\b)$ belongs to one of the infinite families \eqref{it:inf1} and \eqref{it:inf2} or is one of the four sporadic cases \eqref{it:spo1}, \eqref{it:spo2}, \eqref{it:spo3}, and \eqref{it:spo4}.
However, Lemmas \ref{lem:A1-5v2}, \ref{lem:A6}, and \ref{lem:A7-8} show that none of these possibilities can happen. Thus, $L$ could not be smoothly slice in $S^2 \times S^2$.
\end{proof}

To find a concrete example of a 2-component link that is not slice in $S^2 \times S^2$, we need to produce a link satisfying all assumptions \eqref{it:A1}-\eqref{it:A8}.
In order to satisfy assumption \eqref{it:A3}, we will choose a knot $K$ with certain properties, and then set $A=B=K$. The properties that $K$ must satisfy are:
\begin{itemize}
    \item $g_4(K)=1$;
    \item $\Arf K = 1$;
    \item $\sigma_K(\zeta_2) = \sigma_K(\zeta_4) = \sigma_K(\zeta_8) = 2$.
\end{itemize}
A search with KnotInfo \cite{knotinfo} showed that we can choose $K=m(7_2)$, the mirror of the knot $7_2$.

Thus, since the link in Figure \ref{fig:S2xS2link} satisfies all assumptions \eqref{it:A1}-\eqref{it:A8}, Theorem \ref{thm:S2xS2assumptions} implies Theorem \ref{thm:S2xS2}.

\section{The search for exotic \texorpdfstring{$S^2 \times S^2$}{S2 x S2}'s}

Using a link $L = A \sqcup B$ which is not slice in $S^2 \times S^2$, produced from Theorem \ref{thm:S2xS2assumptions}, we can potentially give examples of an exotic $S^2 \times S^2$, using the proposition below.
In the next proposition $S^3_{f_A,f_B}(L)$ denotes the surgery on $L$ with framings $f_A$ and $f_B$, and $X_{f_A,f_B}(L)$ denotes the corresponding trace.

\begin{proposition}
\label{prop:exotic}
Suppose that $L = A \sqcup B$ is a link in $S^3$ which is not slice in $S^2 \times S^2$, and let $f_A$ and $f_B$ be framings for the two link components such that both $f_A$ and $f_B$ are even and the matrix
\[
Q :=
\left(
\begin{matrix}
    f_A & \lk \\
    \lk & f_B
\end{matrix}
\right)
\]
is of rank 2 and indefinite, where $\lk = \lk(A,B)$. 
If the rational homology sphere $Y = S^3_{f_A,f_B}(L)$ bounds a spin rational homology ball $W$ with $\pi_1(W)$ normally generated by $\pi(Y)$ such that $X := X_{f_A,f_B}(L) \cup -W$ is spin, then $X$ is an exotic $S^2 \times S^2$.
\end{proposition}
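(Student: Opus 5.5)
We must show two things: that $X$ is homeomorphic to $S^2 \times S^2$, and that $X$ is not diffeomorphic to $S^2 \times S^2$. The second is where the non-sliceness of $L$ enters.

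\emph{Step 1: topology of $X$.} Since $W$ is a rational homology ball and $Y$ a rational homology sphere, Mayer--Vietoris shows that $H_2(X;\mathbb{Q})$ has rank $2$. The intersection form on the image of $H_2(X_{f_A,f_B}(L))$ is $Q$, which has rank $2$ and is indefinite. Hence $b_2^+(X)=b_2^-(X)=1$ and $\sigma(X)=0$.

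For $\pi_1$, van Kampen applies: $\pi_1(X_{f_A,f_B}(L))=1$, and $\pi_1(W)$ is normally generated by the image of $\pi_1(Y)$, so $\pi_1(X)=1$. Thus $X$ is a closed, simply connected, smooth 4-manifold with $b_2=2$ and $\sigma=0$. It is spin by assumption, so its intersection form is even, indefinite and unimodular of rank $2$, and therefore isomorphic to $H$. Since $X$ is smooth, $\ks(X)=0$. Freedman's classification then gives that $X$ is homeomorphic to $S^2\times S^2$.

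\emph{Step 2: $X$ is not diffeomorphic to $S^2\times S^2$.} Suppose, for contradiction, that there is a diffeomorphism $\phi\colon X\to S^2\times S^2$. The cores of the two 2-handles of $X_{f_A,f_B}(L)$ are disjoint smooth discs. Together with the cocores they show that $L$ bounds disjoint smooth discs in $X$ minus a 4-ball, namely the 0-handle $B^4$ of the trace. Concretely, we take $B^4\subset X_{f_A,f_B}(L)$ to be the 0-handle. The complement $X\setminus \Int(B^4)$ then contains the cores of the 2-handles, which are disjoint smooth discs bounded by $L\subset S^3=\partial B^4$.

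Transporting everything by $\phi$, we obtain a smooth ball $\phi(B^4)\subset S^2\times S^2$ whose complement contains disjoint smooth discs bounded by $\phi(L)$. Any two smoothly embedded balls are ambiently isotopic, so we may assume $\phi(B^4)$ is the standard ball. Under the induced identification of boundaries, the orientation-preserving diffeomorphism of $S^3$ is isotopic to the identity (Cerf), so the link is still $L$. If $\phi$ reverses orientation, we compose with an orientation-reversing self-diffeomorphism of $S^2\times S^2$, for example reflection in one factor. Hence $L$ would be smoothly slice in $S^2\times S^2$, contradicting the hypothesis.

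The main point needing care is the orientation and identification bookkeeping in Step 2, in particular matching $L$ and not its mirror. Note, however, that $S^2\times S^2$ admits an orientation-reversing diffeomorphism. So it suffices to fix orientations so that $X$ induces the orientation on $\partial B^4$ for which $L$ is the given link. The remaining detail is the Cerf/Palais argument for moving the ball and identifying the boundary.
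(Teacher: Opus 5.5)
Your proof is correct and follows the paper's route: van Kampen plus Freedman's classification give the homeomorphism, and the cores of the 2-handles show that $L$ is smoothly slice in $X$, which rules out a diffeomorphism. You also fill in details the paper leaves implicit, namely identifying the intersection form as $H$ from evenness and indefiniteness, and the ball-isotopy, Cerf and orientation bookkeeping, which is harmless because $S^2\times S^2$ admits an orientation-reversing diffeomorphism.
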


For example, every \emph{ribbon} rational homology ball (i.e.\ one that has a handle decomposition with no 3-handles) has $\pi_1$ normally generated by the boundary.

Unlike in $\CP2 \# \bCP2$ \cite{MM:CP2CP2bar}, the manifold $Y$ from Proposition \ref{prop:exotic} cannot be an integer homology sphere, as the surgery framings must both be even (because $S^2 \times S^2$ has an even intersection form) and the linking number is also even (by \eqref{it:A4}), so $\det Q$ is even, meaning that the order of $H_1(Y;\Z)$ is also even.

\begin{proof}

If $Y$ bounds an rational homology ball $W$ with $\pi_1(W)$ normally generated by $\pi(Y)$, then the result of the gluing $X:= X_{f_A, f_B}(L) \cup -W$ is simply-connected, by Van Kampen's theorem.
Thus, $X$ is homeomorphic to $S^2 \times S^2$ by Freedman's classification \cite{F:classification}. On the other hand, $X$ cannot be diffeomorphic to $S^2 \times S^2$, because $L$ is obviously smoothly slice in $X$ (by construction), but $L$ is not smoothly slice in $S^2 \times S^2$ by assumption. 
\end{proof}

\appendix

\section{Assumptions for the link not smoothly slice in \texorpdfstring{$S^2 \times S^2$}{S2 x S2}}
\label{appendix:S2xS2}
\begin{enumerate}[font=\textbf]
    \item[\eqref{it:A1}]
    $L$ has a diagram as in Figure \ref{fig:S2xS2structure}, where $T_A$ (resp.\ $T_B$) is a $(1,1)$-tangle whose closure is $A$ (resp.\ $B$), and $n \in \Z$ is the number of right-handed full twists added in the region.
    \item[\eqref{it:A2}] $g_{B^4}(A) = g_{B^4}(B) = 1$.
    \item[\eqref{it:A3}] The link $L$ has an ambient isotopy that swaps $A$ and $B$.
    \item[\eqref{it:A4}] $\lk(A,B) = -4$.
    \item[\eqref{it:A5}] $\Arf A = \Arf B = 1$.
    \item[\eqref{it:A6}] $\sigma_A(\zeta_2) = \sigma_B(\zeta_2) = 2$.
    \item[\eqref{it:A7}] $\sigma_A(\zeta_4) = \sigma_B(\zeta_4) = 2$.
    \item[\eqref{it:A8}] $\sigma_A(\zeta_8) = \sigma_B(\zeta_8) = 2$.
\end{enumerate}

\bibliographystyle{alpha}
\bibliography{bibliography}

\end{document}